\newtheorem{thm}{Theorem}[section]
\newtheorem{prop}[thm]{Proposition}
\newtheorem{lemma}[thm]{Lemma}
\theoremstyle{definition}
\newtheorem{defin}[thm]{Definition}
\theoremstyle{remark}
\newtheorem{rem}[thm]{Remark}
\newtheorem{cor}[thm]{Corollary}
\newcommand{\iso}{\simeq}
\newcommand{\desc}{{\mathfrak{Desc}}}
\newcommand{\C}{\mathbb{C}}
\newcommand{\ccal}{\mathcal{C}}
\newcommand{\R}{\mathbb{R}}
\newcommand{\She}{\mathcal{O}}
\newcommand{\Ob}{X}
\newcommand{\F}{\mathcal{F}}
\newcommand{\QCoh}{\operatorname{QCoh}}
\newcommand{\Hom}{\operatorname{Hom}}
\newcommand{\pr}{{\operatorname{pr}}}
\newcommand{\id}{\operatorname{Id}}
\begin{document}

\title{Quasi-coherent Hecke category and Demazure Descent}
\author{Sergey Arkhipov}
\author{Tina Kanstrup}
\address{S.A. Matematisk Institut, Aarhus Universitet, Ny Munkegade, DK-8000 , Århus C, Denmark, email: hippie@qgm.au.dk} 
\address{T.K. Centre for Quantum Geometry of Moduli Spaces, Aarhus Universitet, Ny Munkegade, DK-8000 , Århus C, Denmark, email: tina@qgm.au.dk}
\dedicatory{To Boris Feigin on the occasion of his 60-th birthday\\ with gratitude and admiration}
\maketitle

\begin{abstract}
Let $G$ be a reductive algebraic group with a Borel subgroup $B$. We define the quasi-coherent Hecke category for the pair $(G,B)$. For any regular Noetherian $G$-scheme $X$ we construct a monoidal action of the Hecke category on the derived category of $B$-equivariant quasi-coherent sheaves on $X$. Using the action we define the Demazure Descent Data on the latter category and prove that the Descent category is equivalent to the derived category of $G$-equivariant sheaves on $X$.
\end{abstract}

\section{Introduction}
The present paper is the second one in the series devoted to the study of Demazure Descent. In \cite{AK} we introduced the notion of Demazure Descent Data on a triangulated category. A category with Demazure Descent Data is a higher analog for a representation of the degenerate Hecke algebra (see \cite{HLS}). 

Such representations arise naturally from geometry. Let $X$ be a compact real manifold 
acted on by a compact simple Lie group $G$ with a fixed maximal torus $T$. 
Harada et al constructed a natural action of the degenerate Hecke algebra of the corresponding type on the $T$-equivariant K-groups of $X$. They showed that the $G$-equivariant K-groups are identified naturally with the invariants for the action. In a way we categorify the construction from \cite{HLS} in our paper.

Let $G$ be a reductive algebraic group with a fixed Borel subgroup $B$. Recall that the (finite) Hecke algebra is defined classically as the algebra of $B(\mathbb{F}_q)$-biequivariant functions on the group $G(\mathbb{F}_q)$ with values in $\C$. The multiplication is provided by convolution. It turns out that the stack $B \backslash G/B$ is a universal geometric tool to produce "algebras", both in the usual and in the categorical sense. In particular, the categories of constructible sheaves and D-modules on $B \backslash G/B$ were studied in \cite{Tan}. They were used as a natural source of finite Hecke algebra actions on categories of geometric origin.

In the present paper, we consider the quasi-coherent Hecke category QCHecke$(G,B)$, the derived category of $B$-biequivariant quasi-coherent sheaves on $G$. For technical reasons we prefer to work with the equivalent category - the derived category of $G$-equivariant quasi-coherent sheaves on $G/B \times G/B$.

Let us outline the structure of the paper. We recall the standard definitions and facts related to equivariant quasi-coherent sheaves on a scheme in Section 2.
We introduce the monoidal category $\text{QCHecke}(G,B)$ in Section 3. In Section 4 we recall the definitions of Demazure Descent Data on a triangulated category and of the corresponding Descent category. 

Let $X$ be a regular Noetherian scheme acted on by a reductive algebraic group $G$. Section 5 is devoted to the construction of Demazure Descent Data on the derived category of $B$-equivariant quasicoherent sheaves on $X$ in terms of the natural monoidal action of  $\text{QCHecke}(G,B)$ on the category  by convolution.
\vskip 2mm
\noindent
{\bf Theorem:}
For $w \in W$ let $X_w$ denote the closure of the corresponding $G$ orbit in $G/B \times G/B$. Then the functors of convolution with the structure sheaves $\She_{X_w}$ define Demazure Descent Data on $D(\QCoh^B(X))$.
\vskip 2mm

In the last Section, we study the corresponding Descent category. We prove the central result of the paper: 
\vskip 2mm
\noindent
{\bf Theorem:}
$\desc(D(\QCoh^B(X)),D_w,w\in W)$ is equivalent to $D(\QCoh^G(X))$.

\subsection{Acknowledgements}
The authors are grateful to H.H. Andersen, C. Dodd, V. Ginzburg, M. Harada and R. Rouquier for many stimulating discussions. The project started in the summer of 2012 when the first named author visited IHES. S.A. is grateful to IHES for perfect working conditions. Part of the work was done while the second author visited R. Bezrukavnikov at MIT in the Fall 2013. T.K. would like to express her deepest gratitude to R. Bezrukavnikov for all that he taught her during her stay and for useful comments on previous versions of the text. T.K. would also like to thank MIT for perfect working conditions.

Both authors' research was supported in part by center of excellence grants "Centre for Quantum Geometry of Moduli Spaces" and by FNU grant "Algebraic Groups and Applications".

\subsection{Conventions.} In the present paper, we work over an algebraically closed field $k$ of characteristic zero. An algebraic group means an affine algebraic group scheme over $k$. All schemes are supposed to be Noetherian, of finite Krull dimension over $k$.

\section{Equivariant quasi-coherent sheaves on a scheme}

Below we collect the main facts about equivariant quasi-coherent sheaves to be used later.
In this Section, $K$ denotes a not necessarily reductive  algebraic group. 

Let $X$ be a $K$-scheme. Denote  the action (resp., the projection) map  $K \times X \to X$ by $\text{ac}$ (resp., by  $p$). Recall that a $K$-equivariance structure on a quasi-coherent sheaf $M$ on $X$ is given by an isomorphism $\theta$ between $p^*(M)$ and $\text{ac}^*(M)$ such that the further pull-backs of $\theta$ to $K \times K\times X$ satisfy the standard cocycle condition (see e.g. \cite{Bri}, Section 2, for the precise formulation).

The category  of quasi-coherent sheaves on $X$  (resp., of $K$-equivariant quasi-coherent sheaves on $X$) is denoted by $\QCoh(X)$ (resp., by $\QCoh^K(X)$). We have the forgetful functor 
$Oblv:\ \QCoh^K(X)\to \QCoh(X)$.

Below we always assume that $X$ is good enough, and any $K$-equivariant quasi-coherent sheaf has a uniformly bounded resolution by $K$-equivariant quasi-coherent sheaves locally free on $X$.

Let $f: X \to Y$ be a $K$-equivariant map of $K$-schemes. The functors of push-forward and pull-back are extended naturally to the categories of equivariant sheaves:
\begin{gather}
f^* : \QCoh^K(Y) \to \QCoh^K(X), \qquad (M,\theta) \mapsto (f^*M, f^* \theta \circ \text{canonical isomorphisms}),\\
f_* : \QCoh^K(X) \to \QCoh^K(Y), \qquad (M,\theta) \mapsto (f_*M,(\id \times f)_* \theta \circ \text{canonical isomorphisms}).
\end{gather}
Notice that both $f^*$ and $f_*$ commute with $Oblv$.

Let $K,H$ be  algebraic groups acting on a scheme $X$ so that the actions commute. Assume that $X$ admits an $H$-equivariant quotient  $q: X \to X/K$ which is a locally trivial principal $K$-bundle. Denote the quotient scheme $X/K$ by $Y$.
\begin{lemma}
The inverse image functor provides   an equivalence of  Abelian categories $ \QCoh^H(Y) \to \QCoh^{H \times K}(X)$.
\end{lemma}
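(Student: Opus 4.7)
The plan is to reduce the statement to faithfully flat descent for quasi-coherent sheaves. Since $q : X \to Y$ is a locally trivial principal $K$-bundle, it is in particular faithfully flat and quasi-compact. By descent, pullback $q^*$ identifies $\QCoh(Y)$ with the category of pairs $(M,\varphi)$, where $M\in\QCoh(X)$ and $\varphi : \pr_1^* M \isomap \pr_2^* M$ is an isomorphism on $X\times_Y X$ satisfying the cocycle condition on $X\times_Y X\times_Y X$.

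Next, I would identify the descent data with a $K$-equivariance structure. Since $q$ is a principal $K$-bundle, the map $K\times X\to X\times_Y X$, $(k,x)\mapsto(kx,x)$, is an isomorphism of schemes; indeed it can be checked locally, and on a trivializing open $U\subseteq Y$ with $q^{-1}(U)\cong K\times U$ it is manifestly an isomorphism. Under this isomorphism the two projections $\pr_1,\pr_2:X\times_Y X\to X$ correspond to the action map $\text{ac}$ and the projection $p$ on $K\times X$. Hence a descent datum $\varphi$ is the same as an isomorphism $\text{ac}^* M \isomap p^* M$, and one checks that under the further identification $X\times_Y X\times_Y X \cong K\times K\times X$ the descent cocycle condition becomes exactly the cocycle condition defining a $K$-equivariance structure $\theta$ on $M$. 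This already yields an equivalence $\QCoh(Y)\isomap\QCoh^K(X)$ via $q^*$.

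Now I would incorporate the second group. Since the $H$- and $K$-actions commute, $q$ is $H$-equivariant and the isomorphism $K\times X\cong X\times_Y X$ is compatible with the diagonal $H$-action on both sides. Therefore, if $N\in\QCoh^H(Y)$, then $q^*N$ carries both its canonical $K$-equivariance (from descent) and an $H$-equivariance $q^*\theta_H$ pulled back from $N$; these commute, giving an $H\times K$-structure on $q^*N$. Conversely, given $(M,\theta_K,\theta_H)\in\QCoh^{H\times K}(X)$, the commutation of the two structures means that $\theta_K$ is an isomorphism in the category of $H$-equivariant sheaves on $X\times_Y X$, so the descended sheaf on $Y$ inherits a canonical $H$-equivariance from $\theta_H$. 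The constructions are mutually inverse by the unpacking above, so $q^*$ is the desired equivalence of abelian categories.

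The main obstacle is the verification that the $K$-equivariance cocycle matches the descent cocycle under the identification $X\times_Y X\cong K\times X$; this is just a diagram chase, but it must be done carefully because conventions for which projection is the action map and in which order one composes the isomorphisms determine whether one gets equivariance or its opposite. Everything else is formal: exactness of $q^*$ follows from flatness of $q$, so the equivalence is at the abelian and not merely derived level, and $H$-equivariance is preserved throughout because all maps entering the descent picture are $H$-equivariant.
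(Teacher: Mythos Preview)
Your argument is correct and is the standard way to prove this statement: faithfully flat descent for quasi-coherent sheaves along $q$, together with the identification $K\times X\isomap X\times_Y X$ characteristic of a principal $K$-bundle, translates descent data into a $K$-equivariance structure, and the commuting $H$-action is carried along because all the maps in the descent picture are $H$-equivariant. The paper itself does not give an argument but simply refers to \cite{Bri}, Section~2; the content of that reference is exactly the descent argument you outline, so your proposal is in essence the same approach, just written out rather than cited.
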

\begin{proof}
See \cite{Bri}, discussion in Section 2.
\end{proof}

Let $X$ be a $K$-scheme. 
It is known that the forgetful functor $\QCoh^K(X)\to \QCoh(X)$ has an exact right adjoint functor denoted by $Av^K$, and for any $M\in \QCoh(X)$ the natural map $M\to Oblv\circ Av^K(M)$ is an embedding. It follows that the category $\QCoh^K(X)$ has enough injective objects  (see \cite{Bez}, Section 2).  Moreover, Varagnolo and  Vasserot state that under very mild restrictions on $X$, any unbounded complex in  $\QCoh^K(X)$ has a K-injective resolution and a K-flat resolution (see \cite{VV}, 1.5.6 and further paragraphs).

To avoid further restrictions on $X$, we work in the unbounded derived category $D\QCoh^K(X)$. The functor $Oblv$ is exact and extends to the functor $D\QCoh^K(X)\to D\QCoh(X)$. 

Let $f : X \to Y$ be a $K$-morphism of  normal Noetherian quasi-projective $K$-schemes.  
Consider the derived functors 
$$Lf^*:\  D\QCoh^K(Y)\to D\QCoh^K(X) \text{ and }Rf_*:\ D\QCoh^K(X)\to D\QCoh^K(Y).$$
 It is known that the functors  $Rf_*$ and $Oblv$ (resp., $Lf^*$ and $Oblv$) commute (see e.g. \cite{VV}, Lemma 1.5.9 and the discussion immediately after it). 

 Derived tensor products  in  $D\QCoh^K(Y)$ and  $D\QCoh^K(X)$ are denoted by $\overset{L}{\otimes}_Y$ and  $\overset{L}{\otimes}_X$ respectively. We have the projection formula as follows:

\begin{prop} 
For $N\in D\QCoh^K(Y)$ and $M\in D\QCoh^K(X)$ we have a canonical isomorphism  $Rf_*N \overset{L}\otimes_{Y}M \iso Rf_*(N \overset{L}\otimes_{X} Lf^*M)$.
\end{prop}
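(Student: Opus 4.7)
The plan is to reduce the equivariant projection formula to the already-known non-equivariant one by exploiting the good behavior of the forgetful functor $Oblv$.

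First I would construct the canonical morphism in the $K$-equivariant derived category. As usual, one builds it via adjunction: using the obvious compatibility $Lf^*(A\overset{L}{\otimes}_Y B)\iso Lf^* A\overset{L}{\otimes}_X Lf^* B$ (which holds already at the level of underlying sheaves and lifts to the $K$-equivariant setting because $f^*$ is symmetric monoidal on equivariance data), together with the counit $Lf^*Rf_*\to \id$, one gets a morphism
\begin{equation}
Lf^*\bigl(Rf_*N\overset{L}{\otimes}_Y M\bigr)\iso Lf^*Rf_*N\overset{L}{\otimes}_X Lf^*M\to N\overset{L}{\otimes}_X Lf^*M.
\end{equation}
Transposing across the $(Lf^*,Rf_*)$-adjunction yields the canonical map in $D\QCoh^K(Y)$ whose invertibility we must prove.

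Next, I would apply $Oblv:D\QCoh^K(Y)\to D\QCoh(Y)$ to this canonical morphism. The paper has already recalled that $Oblv$ commutes with both $Rf_*$ and $Lf^*$; the derived tensor product is computed using K-flat resolutions (existence of which is precisely what the Varagnolo--Vasserot hypothesis guarantees), and since tensoring is a local, equivariance-preserving operation, $Oblv$ also commutes with $\overset{L}{\otimes}$. It follows that $Oblv$ sends our canonical map to the analogous canonical map in the non-equivariant derived category, which is an isomorphism by the classical projection formula for quasi-coherent sheaves on Noetherian quasi-projective schemes.

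Finally, I would conclude by conservativity. The functor $Oblv$ is exact, so it sends the cone of the equivariant canonical map to the cone of the non-equivariant canonical map; the latter is zero. Since a complex of $K$-equivariant sheaves is acyclic iff its underlying complex of quasi-coherent sheaves is acyclic (the equivariance structure is extra data living over an acyclic object, so its cohomology also vanishes), $Oblv$ reflects zero objects. Hence the cone of our canonical map vanishes in $D\QCoh^K(Y)$, which is the desired statement.

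The main technical subtlety, and the only point that is not purely formal, is the interaction of $Oblv$ with $\overset{L}{\otimes}$ in the unbounded setting: one needs K-flat resolutions in $\QCoh^K$ whose underlying complexes are K-flat in $\QCoh$, so that the derived tensor product is computed compatibly on both sides. The hypothesis invoked in the excerpt (uniformly bounded equivariant locally free resolutions, together with the Varagnolo--Vasserot existence of K-flat resolutions) is exactly tailored to supply this, and once it is in place every other step is formal manipulation with adjunctions.
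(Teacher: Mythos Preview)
Your argument is correct and is the standard way to establish the equivariant projection formula: build the comparison map by adjunction, then check it is an isomorphism after applying $Oblv$, using that $Oblv$ commutes with $Rf_*$, $Lf^*$, and $\overset{L}{\otimes}$ and is conservative.

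The paper itself does not give an argument at all; its proof consists solely of the reference ``See \cite{VV} section 1.5.8.'' Your write-up is thus more detailed than what the paper provides, and in fact it is precisely the line of reasoning that \cite{VV} uses. (Incidentally, the statement as printed has the roles of $M$ and $N$ swapped; your proof implicitly reads it the right way, with $N$ on $X$ and $M$ on $Y$.)
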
 
\begin{proof}
See \cite{VV} section 1.5.8.
\end{proof}
Recall the equivariant  flat base change theorem. Let $g :Z \to Y$ be a flat $K$-morphism, where $Z$ is a normal quasi-projective $K$-scheme. Consider the Cartesian squre
\[ \xymatrix{Z \times_Y X \ar[d]^{f'} \ar[r]^{g'} & X \ar[d]^f \\ Z \ar[r]^g & Y} \]

\begin{prop} \label{equiprop} 
The standard adjunction map provides an isomorphism of functors $Lg^* Rf_* \iso R f'_* Lg'^*$.
\end{prop}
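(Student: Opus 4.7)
The plan is to reduce the equivariant statement to the classical, non-equivariant flat base change by exploiting that all functors involved commute with the forgetful functor $Oblv$, and that $Oblv$ is conservative on the derived category.

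First I would construct the base change morphism of functors $Lg^* Rf_* \to Rf'_* Lg'^*$ on $D\QCoh^K(Y)$ in the standard way: starting from the unit $\id \to Rg'_* Lg'^*$ and the counit $Lf'^* Rf'_* \to \id$ (or equivalently via the adjunction $(Lg^*, Rg_*)$), we obtain a canonical arrow $Lg^*Rf_* \to Rf'_* Lg'^*$. This construction is entirely formal and produces the same natural transformation both in the equivariant and non-equivariant setups; moreover, it is manifestly compatible with $Oblv$ in the sense that the diagram
\begin{equation}
\xymatrix{
Oblv\, Lg^* Rf_* \ar[r] \ar[d]_{\wr} & Oblv\, Rf'_* Lg'^* \ar[d]^{\wr} \\
Lg^* Rf_*\, Oblv \ar[r] & Rf'_* Lg'^*\, Oblv
}
\end{equation}
commutes, using the earlier-recorded facts that $Rf_*$ and $Lf^*$ commute with $Oblv$ (and similarly for $f', g, g'$).

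Next I would invoke the classical flat base change theorem in $D\QCoh$: because $g$ is flat the non-equivariant base change map is an isomorphism. Combined with the commuting square above, this shows that for every $M \in D\QCoh^K(Y)$ the map $Oblv\bigl(Lg^* Rf_* M \to Rf'_* Lg'^* M\bigr)$ is an isomorphism in $D\QCoh(Z)$.

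Finally I would conclude by conservativity of $Oblv : D\QCoh^K(Z) \to D\QCoh(Z)$. A morphism in the equivariant derived category is an isomorphism iff its cone is acyclic, and acyclicity is detected after applying $Oblv$, since $Oblv$ is exact and faithful on the abelian categories of equivariant sheaves and commutes with taking cohomology. Therefore the equivariant base change map is itself an isomorphism, which is the claim. The only subtle point, and hence the step I would double-check most carefully, is the conservativity of $Oblv$ on the \emph{unbounded} derived category $D\QCoh^K(Z)$; this is precisely the setting in which one needs the K-injective and K-flat resolutions whose existence was quoted above from Varagnolo--Vasserot, and which allow one to reduce the vanishing of a cone in $D\QCoh^K(Z)$ to termwise vanishing after forgetting equivariance.
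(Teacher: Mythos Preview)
Your argument is correct. Note, however, that the paper does not actually supply a proof of this proposition: it is stated among the preliminary facts of Section~2, in the same spirit as the preceding projection formula, which is simply referred to \cite{VV}. So there is no ``paper's proof'' to compare against; the result is quoted as standard.

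As for the content of your argument, each step is sound. The construction of the base change morphism is formal and compatible with $Oblv$ because the units and counits of the relevant adjunctions are; the non-equivariant flat base change on $D\QCoh$ is classical under the Noetherian hypotheses in force; and your justification of conservativity of $Oblv$ on the unbounded derived category is exactly right: an object of $D\QCoh^K(Z)$ vanishes iff all its cohomology sheaves vanish, $Oblv$ is exact hence commutes with cohomology, and $Oblv$ is faithful on the abelian level. The appeal to K-injective/K-flat resolutions from \cite{VV} is not strictly needed for conservativity itself, but it is what guarantees that the derived functors and the commutation $Oblv\circ Rf_*\simeq Rf_*\circ Oblv$ etc.\ make sense on the unbounded categories, so it is appropriate to mention it.
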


\section{Convolution and the quasi-coherent Hecke category}
\subsection{Convolution monoidal structure.} \label{monoidal}
Let $Z,Y$ and $X$ be  $K$-schemes. Consider the projections
\[ \xymatrix{& Z \times Y \times X \ar[dl]_{\pr_{12}} \ar[d]^{\pr_{13}} \ar[dr]^{\pr_{23}} & \\ Z \times Y & Z \times X & Y \times X} \]
The group $K$ acts on each of the four schemes in the diagram diagonally, and the projections are $K$-equivariant.

The convolution product $*$ is defined as follows:
\begin{gather} 
*: D(\QCoh^{K}(Z \times Y)) \times D(\QCoh^{K}(Y \times X))\to D(\QCoh^K(Z \times X)),\\
M_1* M_2 := R\pr_{13*} (L\pr_{12}^*M_1\overset{L} \otimes_{Z\times Y\times X} L\pr_{23}^*M_2).
\end{gather}
Suppose that $Z=Y=X$ is regular. In this case,  the convolution product becomes a monoidal structure
$$*: \ D(\QCoh^{K}(X\times X)) \times D(\QCoh^{K}(X \times X))\to D(\QCoh^K(X \times X))$$
in a weak sense: the associativity constraint $(M_1*M_2)*M_3\tilde{\to} M_1*(M_2*M_3)$  is not specified. 

In the same way, if $Z=Y$ is regular, the convolution product induces a monoidal action (in the same weak sense) of the monoidal category $ D(\QCoh^K(Y \times Y))$ on the category 
$ D(\QCoh^K(Y \times X))$.

\subsection{Comparing two convolutions.} The key technical statement in the proof of associativity of the monoidal structure above as well as of several more specific Lemmas below requires the following setup.

Suppose we have the $K$-schemes $X_1$, $X_2$ and $X_3$, $Z_{12}$, $Z'_{12}$ and $Z_{23}$. We are given the $K$-equivariant  flat maps
$$
p_1:\ Z_{12}\to X_1,\  p_2:\ Z_{12}\to X_2,\ q_2:\ Z_{23}\to X_2,\ q_3:\ Z_{23}\to X_3,
$$
and $\alpha:\ Z_{12}\to Z'_{12}\to Z_{12}$ such that $p'_1=p_1\circ \alpha$ and $p'_2=p_2\circ \alpha$ are also flat. 

Consider the standard projections
\begin{gather*}
\text{pr}_{12}:\ Z_{12}\times_{X_2} Z_{23}\to Z_{12},\ \text{pr}_{23}:\ Z_{12}\times_{X_2} Z_{23}\to Z_{23},\ \text{pr}_{13}:\ Z_{12}\times_{X_2} Z_{23}\to X_1\times X_3,\\
\text{pr}'_{12}:\ Z'_{12}\times_{X_2} Z_{23}\to Z'_{12},\ \text{pr}'_{23}:\ Z'_{12}\times_{X_2} Z_{23}\to Z'_{23},\ \text{pr}'_{13}:\ Z'_{12}\times_{X_2} Z_{23}\to X_1\times X_3.
\end{gather*}
We introduce the convolution products
\begin{gather*}
*:\ D(\QCoh^{K}(Z' _{12})) \times D(\QCoh^{K}(Z_{23}))\to D(\QCoh^K(X_1\times X_3)),\\
M_1* M_2 := R\pr_{13*} (L\pr_{12}^*(R\alpha_*(M_1))\overset{L} \otimes_{Z_{12}\times_{X_2} Z_{23}} L\pr_{23}^*(M_2))
\end{gather*}
and
\begin{gather*}
*':\ D(\QCoh^{K}(Z' _{12})) \times D(\QCoh^{K}(Z_{23}))\to D(\QCoh^K(X_1\times X_3)),\\
M_1*' M_2 := R\pr'_{13*} (L(\pr'_{12})^*(M_1)\overset{L} \otimes_{Z'_{12}\times_{X_2} Z_{23}} L(\pr'_{23})^*(M_2)).
\end{gather*}
\begin{lemma} \label{technical}
The convolutions $*$ and $*'$ are canonically isomorphic.
\end{lemma}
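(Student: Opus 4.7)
The plan is to rewrite $M_1 * M_2$ by applying flat base change along $\alpha$ to swap $L\pr_{12}^*$ and $R\alpha_*$, and then to absorb the resulting pushforward along $\tilde{\alpha}$ into the outer pushforward along $\pr_{13}$ using the projection formula.

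More precisely, the map $\alpha: Z'_{12} \to Z_{12}$ induces a Cartesian square
\[
\xymatrix{
Z'_{12} \times_{X_2} Z_{23} \ar[r]^-{\pr'_{12}} \ar[d]_-{\tilde{\alpha}} & Z'_{12} \ar[d]^-{\alpha} \\
Z_{12} \times_{X_2} Z_{23} \ar[r]^-{\pr_{12}} & Z_{12}
}
\]
with $\tilde{\alpha} := \alpha \times \id_{Z_{23}}$; identifying the top-left corner uses precisely the condition $p_2 \circ \alpha = p'_2$. Since $q_2$ is flat, so is its base change $\pr_{12}$, so Proposition \ref{equiprop} furnishes a canonical isomorphism
\[ L\pr_{12}^* \circ R\alpha_* \iso R\tilde{\alpha}_* \circ L(\pr'_{12})^*. \]

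Substituting this into the definition of $M_1 * M_2$ replaces $L\pr_{12}^* R\alpha_* M_1$ by $R\tilde{\alpha}_* L(\pr'_{12})^* M_1$, after which I apply the projection formula along $\tilde{\alpha}$ to move $R\tilde{\alpha}_*$ outside the derived tensor product:
\[ R\tilde{\alpha}_* L(\pr'_{12})^* M_1 \overset{L}{\otimes} L\pr_{23}^* M_2 \iso R\tilde{\alpha}_*\bigl( L(\pr'_{12})^* M_1 \overset{L}{\otimes} L\tilde{\alpha}^* L\pr_{23}^* M_2 \bigr). \]
Because $\pr_{23} \circ \tilde{\alpha} = \pr'_{23}$, the second tensor factor simplifies to $L(\pr'_{23})^* M_2$. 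Finally, the relation $p_1 \circ \alpha = p'_1$ gives $\pr_{13} \circ \tilde{\alpha} = \pr'_{13}$, so $R\pr_{13*} \circ R\tilde{\alpha}_* = R\pr'_{13*}$, and the whole expression collapses to $M_1 *' M_2$.

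The argument is essentially a string of canonical isomorphisms; the only substantive geometric input is the Cartesianness of the square above, which rests exactly on the compatibilities $p'_i = p_i \circ \alpha$. Equivariance is automatic since every map and functor in sight is $K$-equivariant from the outset, so no further bookkeeping is required. The main (mild) obstacle is simply tracking which side of the base-change and projection-formula isomorphisms one is on, together with confirming that $\pr_{12}$ inherits flatness from $q_2$.
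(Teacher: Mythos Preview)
Your proof is correct and follows exactly the approach sketched in the paper: the map you call $\tilde{\alpha}$ is the paper's $\beta$, and the argument there is summarized as ``the base change via $\pr_{12}$ for the $R\alpha_*$ and the projection formula for the map $\beta$,'' which is precisely the two steps you carry out in detail. You have simply made the standard argument explicit, including the verification that $\pr_{12}$ inherits flatness from $q_2$ and the identifications $\pr_{23}\circ\tilde\alpha=\pr'_{23}$, $\pr_{13}\circ\tilde\alpha=\pr'_{13}$.
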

\begin{proof}
Denote the base change of the map $\alpha$ via $\pr_{12}$ by $\beta: Z'_{12}\times_{X_2} Z_{23}\to Z_{12}\times_{X_2} Z_{23}$.
The argument in the proof is standard and combines the base change via $\pr_{12}$ for the $R\alpha_*$ and the projection formula for the map $\beta$.
\end{proof}
\begin{rem}\label{technical2}
A typical special case in which Lemma \ref{technical} is applied is as follows. Take $X_1=X_2=X_3=X$. For a flat surjective  $K$-equivariant map $X\to Y$ consider
$$Z'_{12}=X\times_YX,\ Z_{12}=Z_{23}=Z_{13}=X\times X.
$$
Lemma \ref{technical} implies that convolution operations defined via $X\times_YX\times X$ and via $X\times X\times X$ coincide.
\end{rem}
\begin{rem}\label{unit}
In particular the unit object in the monoidal category $D(\QCoh^K(X\times X))$  is given by the structure sheaf of the diagonal in $X\times X$ denoted by $\She_{X_\Delta}$.
\end{rem}

\subsection{Convolution and correspondences.} \label{correspondences}
Let $X, Y_1, \ldots Y_n$ be  regular $K$-schemes. Suppose we are given flat surjective maps $\phi_i:\ X\to Y_i$, $i=1,\ldots,n$. Denote the fiber product $X\times_{Y_i}X\subset X\times X$ by 
$\alpha_i:\ X_i\to X\times X$. Consider the iterated fibered product 
$$
Z_{i_1,\ldots,i_k}:=X_{i_1}\times_X\ldots\times_XX_{i_k}=X\times_{Y_{i_1}}X\ldots\times_{Y_{i_k}}X\subset X^{k+1}.
$$
We have the map provided by the projections to the first and last factors
$$
\alpha_{i_1,\ldots,i_k}:\ Z_{i_1,\ldots,i_k}\to X\times X.
$$
Denote the image of the map by $X_{i_1,\ldots,i_k}\subset X\times X$. All the defined schemes are acted naturally by $K$ and all the defined maps are $K$-equivariant.

Consider the sheaves $M_i:=\alpha_{i\ *}({\mathcal{O}_{X_i}})$. 
The category $D(\QCoh^{K}(X \times X))$ acts on the category $D(\QCoh^{K}(X))$ by convolution. Denote the functor of convolution with $M_i$ by $D_i$.

\begin{lemma}\label{comonad}
The functor $D_i:\ D(\QCoh^{K}(X))\to D(\QCoh^{K}(X))$ is isomorphic to the functor $R\phi_i^*\circ L \phi_{i*}$.
\end{lemma}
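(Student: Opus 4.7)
The plan is to unravel the convolution and reduce to equivariant flat base change. Writing $\pi_1, \pi_2 : X \times X \to X$ for the two projections, the action of a kernel $K \in D(\QCoh^K(X \times X))$ on $F \in D(\QCoh^K(X))$ can be written as
\[
K * F \;\iso\; R\pi_{1*}\bigl(K \overset{L}{\otimes}_{X\times X} L\pi_2^* F\bigr),
\]
i.e.\ the specialisation of the general convolution of Subsection \ref{monoidal} in which the rightmost factor is a point, so in particular $D_i F \iso R\pi_{1*}\bigl(\alpha_{i*}\She_{X_i} \overset{L}{\otimes}_{X\times X} L\pi_2^* F\bigr)$.

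First I would apply the projection formula along the closed embedding $\alpha_i : X_i \hookrightarrow X \times X$, obtaining
\[
\alpha_{i*}\She_{X_i} \overset{L}{\otimes}_{X\times X} L\pi_2^* F \;\iso\; R\alpha_{i*}\bigl(L\alpha_i^* L\pi_2^* F\bigr) \;=\; R\alpha_{i*}\, Lp_2^* F,
\]
where $p_j := \pi_j \circ \alpha_i : X_i \to X$ are the two projections of the fiber product $X_i = X \times_{Y_i} X$. Composing further with $R\pi_{1*}$ and using $\pi_1 \circ \alpha_i = p_1$ collapses the iterated push-forward to $Rp_{1*}\, Lp_2^* F$.

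Second, I would observe that the defining square
\[
\xymatrix{X_i \ar[r]^{p_2} \ar[d]_{p_1} & X \ar[d]^{\phi_i} \\ X \ar[r]^{\phi_i} & Y_i}
\]
is Cartesian by the very definition of $X_i$, and the lower horizontal $\phi_i$ is flat by assumption. Equivariant flat base change (Proposition \ref{equiprop}) therefore provides a canonical isomorphism $Rp_{1*}\, Lp_2^* F \iso L\phi_i^* R\phi_{i*} F$, yielding the claimed identification of functors.

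The main point where care is required is bookkeeping: one must check that the projection formula along $\alpha_i$ and the flat base change invoked above really are available in the $K$-equivariant unbounded derived category, which is precisely what the setup in Section 2 was designed to guarantee, and that the convolution formula used above is legitimately the one of Subsection \ref{monoidal}. The latter is automatic once the rightmost factor in the convolution triple-product is taken to be a point, and Remark \ref{technical2} moreover permits one to compute over $X \times_{Y_i} X \times X$ rather than over $X \times X \times X$, recovering the same answer.
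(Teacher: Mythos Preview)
Your proof is correct and follows essentially the same route as the paper's: both reduce $M_i * F$ to $Rp_{1*}Lp_2^* F$ and then invoke flat base change on the defining Cartesian square of $X_i = X\times_{Y_i}X$. The only cosmetic difference is that the paper obtains the first reduction by citing Lemma~\ref{technical} (whose proof is precisely the projection-formula step you write out directly), whereas you unpack that step explicitly and mention Remark~\ref{technical2} only as an aside.
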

\begin{proof}
Denote the two projections $X_i\to X$ by $q_{1,i}$ and $q_{2,i}$.
Let $N\in D(\QCoh^{K}(X))$. Apply Lemma \ref{technical} as suggested in Remark \ref{technical2}.
Notice that $M_i*N\widetilde{\to}Rq_{1,i*}Lq_{2,i}^*(N)$. Applying flat base change we obtain the statement of the Lemma.
\end{proof}
\begin{cor} Each functor $D_i$ is isomorphic to a comonad. 
Suppose additionally that the maps $\phi_i:\ X\to Y_i$ are rational. Then the comonads $D_i$ are coprojectors, i.e. the coproduct maps $D_i\to D_i\circ D_i$ are isomorphisms of functors.
\end{cor}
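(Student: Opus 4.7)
The plan is to deduce both halves of the corollary from Lemma \ref{comonad}, combined with the general adjunction/comonad formalism and the projection formula stated earlier in the paper.

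\emph{First,} I would invoke Lemma \ref{comonad}, which identifies $D_i$ with the composition $L\phi_i^* \circ R\phi_{i*}$. Since $(L\phi_i^*, R\phi_{i*})$ is a standard adjunction with $L\phi_i^*$ left adjoint to $R\phi_{i*}$, any such composition $FG$ acquires a canonical comonad structure on the target category: the counit is the adjunction counit $\epsilon : FG \to \id$, and the comultiplication is $F\eta G : FG \to FGFG$, induced by the unit $\eta : \id \to GF$ of the adjunction. Transporting this structure along the isomorphism of Lemma \ref{comonad} gives the first assertion essentially for free.

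\emph{Second,} I interpret the condition that $\phi_i$ is rational as the standard sheaf-theoretic condition $R\phi_{i*}\She_X \iso \She_{Y_i}$ (which is exactly what holds in the motivating example of a flat $\mathbb{P}^1$-bundle $G/B \to G/P_i$). The projection formula then gives, for any $N \in D(\QCoh^K(Y_i))$,
\[
R\phi_{i*} L\phi_i^* N \iso R\phi_{i*}\bigl(\She_X \overset{L}\otimes_X L\phi_i^* N\bigr) \iso R\phi_{i*}\She_X \overset{L}\otimes_{Y_i} N \iso N.
\]
The crucial step is to identify this composite iso with the adjunction unit $\eta_N : N \to R\phi_{i*} L\phi_i^* N$. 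Once that identification is made, $\eta$ is a natural isomorphism of endofunctors of $D(\QCoh^K(Y_i))$, and hence $L\phi_i^* \eta R\phi_{i*} : D_i \to D_i \circ D_i$, which is precisely the comonad comultiplication, is an isomorphism.

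\emph{Main obstacle.} Everything except the identification of the projection-formula composite with the adjunction unit $\eta$ is routine; the rationality hypothesis enters only to collapse $R\phi_{i*}\She_X$ to $\She_{Y_i}$. The compatibility itself is a standard diagram chase involving the unit/counit of $L\phi_i^* \dashv R\phi_{i*}$ and the naturality of the projection formula, but it has to be executed carefully, because only this compatibility ensures that the \emph{specific} natural transformation $F\eta G$ built into the comonad structure of $D_i$ is the invertible one, rather than some other automorphism of $D_i$.
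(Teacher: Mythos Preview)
Your proposal is correct and matches the paper's own reasoning. The paper gives no proof for this corollary, but later (Section 5.2) it invokes exactly the two ingredients you use: the adjunction $L\phi_i^*\dashv R\phi_{i*}$ to produce the comonad structure (citing MacLane), and the projection formula together with rationality to conclude that the coproduct map is an isomorphism. Your added care in flagging the compatibility of the projection-formula isomorphism with the adjunction unit is a detail the paper leaves implicit.
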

Our goal is to describe the composition of the functors $D_{i_1}\circ\ldots\circ D_{i_n}$ explicitly. Denote $R\alpha_{i_1,\ldots,i_k*}(\mathcal{O}_{ Z_{i_1,\ldots,i_k}})\in D(\QCoh^{K}(X \times X))$
by $M_{i_1,\ldots,i_k}$.

\begin{lemma}\label{composition}

We have a natural isomorphism of objects in $D(\QCoh^{K}(X \times X))$
$$
M_{i_1}*\ldots*M_{i_k}\widetilde{\to} M_{i_1,\ldots,i_k}.
$$
\end{lemma}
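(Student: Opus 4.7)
The proof proceeds by induction on $k$. The base case $k=1$ is trivial: both sides equal $M_{i_1}$. For the inductive step, assume $M_{i_1} * \cdots * M_{i_{k-1}} \cong M_{i_1, \ldots, i_{k-1}}$ (with an arbitrary but fixed bracketing). It then suffices to prove
$$
M_{i_1, \ldots, i_{k-1}} * M_{i_k} \cong M_{i_1, \ldots, i_k}.
$$

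The key geometric observation is the identification
$$
Z_{i_1, \ldots, i_k} \cong Z_{i_1, \ldots, i_{k-1}} \times_X X_{i_k},
$$
where the fiber product is formed via the last-factor projection $Z_{i_1, \ldots, i_{k-1}} \to X$ and the first-factor projection $X_{i_k} \to X$. Both projections are flat: flatness of $X_{i_k} \to X$ follows from flatness of $\phi_{i_k}$ by base change, while flatness of the last-factor projection $Z_{i_1, \ldots, i_{k-1}} \to X$ follows inductively, since iterated fiber products of flat maps remain flat. Moreover, under this identification, the map $\alpha_{i_1, \ldots, i_k}$ to $X \times X$ coincides with the projection to the first factor of $Z_{i_1, \ldots, i_{k-1}}$ and the last factor of $X_{i_k}$, i.e. the natural $\pr_{13}$-type map for this fiber product.

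The plan is now to apply Lemma \ref{technical} twice, as suggested by Remark \ref{technical2}, rewriting the standard convolution computed on $X \times X \times X$ as a convolution computed on $Z_{i_1, \ldots, i_k}$. First, taking $X_1 = X_2 = X_3 = X$, $Z_{12} = Z_{23} = X \times X$, $Z'_{12} = Z_{i_1, \ldots, i_{k-1}}$, and $\alpha = \alpha_{i_1, \ldots, i_{k-1}}$, Lemma \ref{technical} identifies $M_{i_1, \ldots, i_{k-1}} * M_{i_k}$ with the convolution $\mathcal{O}_{Z_{i_1, \ldots, i_{k-1}}} *' M_{i_k}$ computed on $Z_{i_1, \ldots, i_{k-1}} \times_X (X \times X)$. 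Second, applying the mirror-symmetric form of Lemma \ref{technical} (the proof — base change for $R\alpha_*$ together with the projection formula — is symmetric in the two factors), now with $Z'_{23} = X_{i_k}$ and $\alpha = \alpha_{i_k}$, this is further identified with the convolution of $\mathcal{O}_{Z_{i_1, \ldots, i_{k-1}}}$ and $\mathcal{O}_{X_{i_k}}$ computed on $Z_{i_1, \ldots, i_{k-1}} \times_X X_{i_k} = Z_{i_1, \ldots, i_k}$.

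On $Z_{i_1, \ldots, i_k}$, each of the two derived pullbacks $L\pr_{12}^*\,\mathcal{O}_{Z_{i_1, \ldots, i_{k-1}}}$ and $L\pr_{23}^*\,\mathcal{O}_{X_{i_k}}$ equals $\mathcal{O}_{Z_{i_1, \ldots, i_k}}$ — the pullback of a structure sheaf is the structure sheaf — and the derived tensor product is again $\mathcal{O}_{Z_{i_1, \ldots, i_k}}$. Pushing forward along the projection to the first and last factors, which is exactly $\alpha_{i_1, \ldots, i_k}$ by the observation above, yields $M_{i_1, \ldots, i_k}$, closing the induction. The only mild obstacle is the mirror-symmetric version of Lemma \ref{technical} needed for the second rewriting; this is a routine variant, since the proof technique (base change plus projection formula) is manifestly symmetric in $Z_{12}$ and $Z_{23}$.
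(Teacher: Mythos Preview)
Your proof is correct and follows essentially the same strategy as the paper's: induction, the fiber-product decomposition of $Z_{i_1,\ldots,i_k}$, and Lemma~\ref{technical} in the form of Remark~\ref{technical2}. The only cosmetic difference is that the paper splits off the \emph{first} factor rather than the last, writing $M_{i_1}*M_{i_2,\ldots,i_k}\cong Rq_{1,i_1*}Lq_{2,i_1}^*(M_{i_2,\ldots,i_k})$ via a single application of Lemma~\ref{technical} (with $Z'_{12}=X_{i_1}$) followed by one flat base change along $q_{2,i_1}$; this sidesteps the mirror-symmetric variant of Lemma~\ref{technical} that you invoke, though your observation that the argument is symmetric is of course correct.
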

\begin{proof} We proceed by induction. Like in the proof of the previous Lemma, consider the two projections $q_{1,i}$ and $q_{2,i}: X\times_{Y_i}X\times X\to X\times X$ and notice that
$$ 
M_{i_1}*\ldots*M_{i_k}\widetilde{\to}M_{i_1}*M_{i_2,\ldots,i_k}\widetilde{\to}Rq_{1,i*}Lq_{2,i}^*(M_{i_2,\ldots,i_k}).
$$
Recall that $Z_{i_1,\ldots,i_k}=X\times_{Y_i}Z_{i_2,\ldots,i_k}$. Applying base change, we get that $M_{i_1}*\ldots*M_{i_k}$ is isomorphic to the direct image of $\mathcal{O}_{ Z_{i_1,\ldots,i_k}}$
for the composition of maps
$$
Z_{i_1,\ldots,i_k}=X_{i_1}\times_X Z_{i_2,\ldots,i_k}\to X_{i_1}\times X\to X\times X\times X\overset{\text{pr}_{13}}{\to}X\times X.
$$
The latter map coincides with $\alpha_{i_1,\ldots,i_k}$.
\end{proof}
\begin{cor} \label{rational} Suppose additionally that the map $\alpha_{i_1,\ldots,i_k}:\ Z_{i_1,\ldots,i_k}\to X_{i_1,\ldots,i_k}$ is rational. Then the convolution product $M_{i_1}*\ldots*M_{i_k}$ is isomorphic to the structure sheaf of $X_{i_1,\ldots,i_k}$.
\end{cor}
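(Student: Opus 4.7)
The plan is to reduce everything to Lemma \ref{composition} and then invoke the definition of rationality. By Lemma \ref{composition} we already have a canonical isomorphism
$$M_{i_1}*\ldots*M_{i_k} \iso M_{i_1,\ldots,i_k} = R\alpha_{i_1,\ldots,i_k*}(\mathcal{O}_{Z_{i_1,\ldots,i_k}}),$$
so the only remaining task is to identify $R\alpha_{i_1,\ldots,i_k*}(\mathcal{O}_{Z_{i_1,\ldots,i_k}})$ with $\She_{X_{i_1,\ldots,i_k}}$ as an object of $D(\QCoh^K(X\times X))$.

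First I would factor $\alpha_{i_1,\ldots,i_k}$ through its scheme-theoretic image as
$$Z_{i_1,\ldots,i_k} \xrightarrow{\ \widetilde{\alpha}\ } X_{i_1,\ldots,i_k} \xhookrightarrow{\ j\ } X\times X,$$
where $\widetilde{\alpha}$ is surjective and $j$ is the closed embedding. Since push-forward composes,
$$R\alpha_{i_1,\ldots,i_k*}\mathcal{O}_{Z_{i_1,\ldots,i_k}} \iso j_* R\widetilde{\alpha}_* \mathcal{O}_{Z_{i_1,\ldots,i_k}},$$
and it suffices to prove that $R\widetilde{\alpha}_* \mathcal{O}_{Z_{i_1,\ldots,i_k}} \iso \She_{X_{i_1,\ldots,i_k}}$. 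This is exactly the content of the hypothesis that $\widetilde{\alpha}$ is rational: a rational (proper birational) morphism, by definition, has $\widetilde{\alpha}_*\mathcal{O}_{Z_{i_1,\ldots,i_k}}\iso \She_{X_{i_1,\ldots,i_k}}$ and $R^q\widetilde{\alpha}_*\mathcal{O}_{Z_{i_1,\ldots,i_k}}=0$ for $q>0$. The equivariance of all the structures in sight (the $K$-action on every scheme and the $K$-equivariance of $\widetilde{\alpha}$ and $j$) ensures that the isomorphism lifts to $D(\QCoh^K(X\times X))$, using the compatibility of $R\widetilde{\alpha}_*$ with $Oblv$ recalled in Section 2.

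The only subtlety to keep in mind is the convention on what rational means here: the statement tacitly assumes that ``$\alpha_{i_1,\ldots,i_k}$ is rational'' encapsulates both birationality onto the image and the vanishing of the higher direct images, which is the standard definition of a rational morphism for normal targets and which applies in our setting since all the schemes involved are regular. Once this is fixed, the corollary follows immediately, and no new geometric input beyond Lemma \ref{composition} is required.
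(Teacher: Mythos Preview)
Your argument is correct and coincides with the paper's approach: the corollary is stated there without proof, since it follows immediately from Lemma~\ref{composition} together with the rationality hypothesis $R\alpha_{i_1,\ldots,i_k*}\mathcal{O}_{Z_{i_1,\ldots,i_k}}\iso\mathcal{O}_{X_{i_1,\ldots,i_k}}$. One minor quibble: the image $X_{i_1,\ldots,i_k}$ is in general only normal, not regular (think of Schubert varieties), so your closing remark about regularity is not quite accurate, though it plays no role in the argument.
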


\subsection{Quasi-coherent Hecke category.}
Fix a reductive algebraic group $K$ with an algebraic subgroup $H$. Consider the $K$-scheme $Y=K/H$.
 \begin{defin}
The monoidal category
\[ (D(\QCoh^K(K/H \times K/H)), *) \]
is called the quasi-coherent Hecke category and it is denoted by QCHecke$(K,H)$. 
\end{defin}

Notice that for a $K$-scheme $X$ we have
\[ D(\QCoh^H(X)) \iso D(\QCoh^K(K/H \times X)). \]
Taking $Z=Y=K/H$ in the setting \ref{monoidal} we get the monoidal action 
\[ \text{QCHecke}(K,H) \times D(\QCoh^H(X)) \to D(\QCoh^H(X)). \]

\section{Demazure Descent}

\subsection{Notations}

From now on $G$ is a reductive algebraic group. Let $T$ be a Cartan subgroup of $G$ and let $X$ (resp., $Y$) be the weight (resp., the coroot) lattice of $G$. Choose a Borel subgroup $T\subset B\subset G$. 

Denote the set of roots for $G$ by $\Phi=\Phi^+\sqcup \Phi^-$. Let $\{\alpha_1, \dots, \alpha_n\}$ be the set of simple roots. The Weyl group $W=\text{Norm}(T)/T$ of the fixed maximal torus acts naturally on the lattices $X$ and $Y$ and on the $\R$-vector spaces spanned by them, by reflections in root hyperplanes. The simple reflection corresponding to a simple root  $\alpha_i$ is denoted by $s_i$. 

For an element $w\in W$ denote the length of a minimal expression of $w$ via the generators $\{s_i\}$ by $\ell(w)$. The unique longest element in $W$ is denoted by $w_0$. We have a partial ordering on $W$ called the Bruhat ordering: $w' \leq w$ if there exists a reduced expression for $w'$ that can be obtained from a reduced expression for $w$ by deleting a number of simple reflections. 

The monoid $\text{Br}^+$ with generators $\{T_w,\ w\in W\}$ and relations 
$$T_{w_1}T_{w_2}=T_{w_1w_2}  \text{ if } \ell(w_1)+\ell(w_2)=\ell(w_1w_2) \text{ in } W
$$
is called the braid monoid of $G$.

\subsection{Demazure Descent.}

Fix a reductive algebric group $G$ with the Weyl group $W$ and the braid monoid $\text{Br}^+$. 

\begin{defin}
A weak braid monoid action on a triangulated category $\ccal$ is a collection of triangulated functors
\[ D_w : \ccal \to \ccal, \qquad w \in W \]
satisfying  braid monoid relations, i.e. for all $w_1, w_2 \in W$ there exist isomorphisms of functors  \[ D_{w_1} \circ D_{w_2} \iso D_{w_1w_2}, \qquad \text{if } \ell(w_1 w_2)= \ell(w_1)+\ell(w_2). \]
\end{defin}
Notice that we neither fix the braid relations isomorphisms nor impose any additional relations on them. 

Recall that a comonad structure $(\epsilon,\eta)$ on an endofunctor $D: \ccal\to \ccal$ is a structure of a co-associative coalgebra on $D$ in the monoidal category of endofunctors. Here $\epsilon$ (resp., $\eta$) denotes the counit (resp., the coproduct) morphism for the comonad. A comonad $D$ is a co-projector if the coproduct map   $D \to D\circ D$ is an isomorphism.

\begin{defin}
Demazure Descent Data on a triangulated category $\ccal$ is a weak braid monoid action $\{D_w\}$ together with a co-projector structure $(\epsilon_k,\eta_k)$ on the functor $D_{s_k}$ for every simple reflection $s_k$. 
\end{defin}

\subsection{The descent category.}
Consider a triangulated category $\ccal$ with a fixed Demazure Descent Data $\{D_w,w\in W\}$.
\begin{defin}
The Descent category  $\desc(\ccal,D_w,w\in W)$ is the full subcategory in $\ccal$ with objects $M$ such that for all $k$ the cones of all counit maps 
$D_{s_k}(M) \stackrel{\epsilon_k}{\to} M$ are isomorphic to $0$. 
\end{defin}

\begin{rem}
Suppose that $\ccal$ has functorial cones. Then  $\desc(\ccal,D_w,w\in W)$ is a full triangulated subcategory in $\ccal$ since it is the intersection of kernels of  the  functors $\text{Cone}(D_{s_k} \to Id)$. However, one can prove this statement not using functoriality of cones. Namely, an object $M\in \ccal$ is an object in $\desc(\ccal,D_w,w\in W)$  if and only if all counit maps $D_{s_k} (M)\to M$ are isomorphisms, and it is easy to see that this condition is stable under taking shifts and cones.
\end{rem}

\begin{lemma}
An object $M \in \desc(\ccal,D_w,w\in W)$ is naturally a comodule over each $D_{s_k}$.
\end{lemma}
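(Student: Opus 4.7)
The plan is to exploit the characterization of descent objects noted in the preceding remark: $M$ belongs to $\desc(\ccal, D_w, w\in W)$ if and only if each counit $\epsilon_k(M): D_{s_k}(M)\to M$ is an isomorphism. The coaction will be defined as the inverse of this counit, and the comodule axioms will follow formally from the comonad axioms together with the co-projector assumption on $D_{s_k}$.

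First I would fix a simple reflection $s_k$ and set $\rho := \epsilon_k(M)^{-1}: M \to D_{s_k}(M)$. The counit condition for a comodule, namely $\epsilon_k(M)\circ \rho = \id_M$, is then automatic. For coassociativity we need $D_{s_k}(\rho)\circ \rho = \eta_k(M)\circ \rho$. I would derive this from one of the two standard counit equations for the comonad $D_{s_k}$, namely $D_{s_k}(\epsilon_k(M))\circ \eta_k(M) = \id_{D_{s_k}(M)}$. Since the co-projector hypothesis forces $\eta_k(M)$ to be an isomorphism, this equation gives $\eta_k(M) = D_{s_k}(\epsilon_k(M))^{-1} = D_{s_k}(\epsilon_k(M)^{-1}) = D_{s_k}(\rho)$. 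Composing on the right with $\rho$ yields the desired identity $\eta_k(M)\circ \rho = D_{s_k}(\rho)\circ \rho$.

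Finally I would address the word "naturally" in the statement. Given a morphism $f: M\to M'$ between two objects of the descent category, I would check that $\rho$ intertwines $f$ with $D_{s_k}(f)$, i.e.\ $D_{s_k}(f)\circ \rho_M = \rho_{M'}\circ f$. This is immediate from naturality of $\epsilon_k$: the square $\epsilon_k(M')\circ D_{s_k}(f) = f\circ \epsilon_k(M)$ can be inverted on both counits because $\epsilon_k(M)$ and $\epsilon_k(M')$ are isomorphisms for $M, M' \in \desc(\ccal, D_w, w\in W)$.

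No step appears to be a serious obstacle; the only point that requires minimal care is the derivation $\eta_k(M) = D_{s_k}(\rho)$, which rests squarely on the co-projector hypothesis (without which $\eta_k(M)$ would only be a one-sided section of $D_{s_k}(\epsilon_k(M))$, and coassociativity would fail in general). The construction is clearly functorial in $M$, so the assignment $M \mapsto (M, \rho)$ upgrades the inclusion $\desc(\ccal, D_w, w\in W)\hookrightarrow \ccal$ to a functor into the category of $D_{s_k}$-comodules.
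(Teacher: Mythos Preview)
Your proof is correct and follows essentially the same route as the paper: define the coaction as $\rho=\epsilon_k(M)^{-1}$ and deduce the comodule axioms from the comonad axioms. The paper works with the other counit identity $\epsilon_{D_{s_k}(M)}\circ\eta_k(M)=\id$ together with the naturality square of $\epsilon_k$, whereas you use $D_{s_k}(\epsilon_k(M))\circ\eta_k(M)=\id$ directly; these are interchangeable. One small remark: you do not actually need the co-projector hypothesis at the step where you invert --- since $\epsilon_k(M)$ is an isomorphism, so is $D_{s_k}(\epsilon_k(M))$, and then $D_{s_k}(\epsilon_k(M))\circ\eta_k(M)=\id$ already forces $\eta_k(M)=D_{s_k}(\epsilon_k(M))^{-1}=D_{s_k}(\rho)$.
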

\begin{proof}
By definition the comonad maps
\[ \eta: D_{s_k} \to D_{s_k}^2, \qquad \epsilon: D_{s_k} \to \text{Id} \]
make the following diagrams commutative
\[ \xymatrix{& D_{s_k}(M) \ar@{=}[ld] \ar[d]_{\eta_M} \\ \text{Id} \circ D_{s_k}(M) & D_{s_k}^2(M) \ar[l]^(0.4){\epsilon_{D_{s_k}(M)}}} \qquad \xymatrix{D_{s_k}(M) \ar[r]^{\epsilon_M} & M \\ D_{s_k}^2(M) \ar[u]^{D_{s_k} (\epsilon_M)} \ar[r]_{\epsilon_{D_{s_k}(M)}} & D_{s_k}(M) \ar[u]_{\epsilon_M}}  \]
Notice that the rerequirement that 
Cone$(D_{s_k}(M) \to M) \iso 0$ is equivalent to invertibility of the counit map $\epsilon_M$. By functoriality $D_{s_k}(\epsilon_M)$ is an isomorphism with inverse $D_{s_k}(\epsilon_M ^{-1})$. By the first diagram  $\epsilon_{D_{s_k}M}$ is inverse to $\eta_M$. We get the commutative diagram
\[ \xymatrix{M \ar[r]^{\epsilon_M^{-1}} \ar[d]_{\epsilon_M^{-1}} & D_{s_k}(M) \ar[d]^{\eta_M} \\ D_{s_k}(M) \ar[r]_{D_{s_k}(\epsilon_M^{-1})} & D_{s_k}^2(M) }\]
The counit axiom is verified similarly. Thus, $M$ is a $D_{s_k}$-comodule with the coaction $\epsilon_M^{-1}$.
\end{proof}
\begin{rem}
Recall that in the usual Descent setting either in Algebraic Geometry or in abstract Category Theory (Barr-Beck Theorem) Descent Data includes a pair of adjoint functors and their composition which is a comonad. By definition, the Descent category for such data is the category of comodules over this comonad. Our definition of $\desc(\ccal,D_w,w\in W)$  for Demazure  Descent Data formally is not about comodules, yet the previous Lemma demonstrates that every object of $\desc(\ccal,D_w,w\in W)$ is naturally  equipped with structures of a comodule over each $D_k$ and any morphism in $\desc(\ccal,D_w,w\in W)$ is a morphism of $D_k$-comodules.
\end{rem}

\section{Demazure Descent for $D(\QCoh^G(G/B \times X))$}
Let $X$ be a scheme equipped with an action of a reductive algebraic group $G$.  For every element of the Weyl group $w \in W$ we construct  a functor $D_w$ acting on the category $D(\QCoh^G(G/B \times X))$. The functor is defined in terms of the monoidal action of $\operatorname{QCHecke}(G,B)$.

\subsection{Bott-Samelson varieties.}
Recall that the orbits for the  the diagonal action of $G$  on $G/B \times G/B$ are in bijection with elements of the Weyl group \cite[Theorem 3.1.9]{CG}. For $w \in W$ let $X_w$ be the closure of the corresponding orbit. The structure sheaves of the  orbit closures $\She_{X_w}, w\in W$,  ar objects of   the category $\operatorname{ QCHecke}(G,B)$.
 
 We will use an explicit description of the orbit closures. \label{orbits}
 For a simple root $\alpha_k$  denote the standard parabolic subgroup corresponding to $\alpha_k$ and containing $B$ by $P_k$.  We have   
$X_{s_k} =G/B\times_{G/P_k}G/B$.  To simplify 
the notations below we write $X_k$ for $X_{s_k}$. Notice that the subscheme $X_{k} \subset G/B \times G/B$ is regular and each of the projections to the factors $p_1,p_2:\ X_{s_k} \to G/B$ is flat.
Thus we are in the setting of \ref{correspondences}.

For an element $w\in W$, fix a reduced expression $w=s_{i_1}\ldots s_{i_k}$ where $k=\ell(w)$. 
The iterated fibered product 
$$
Z_{i_1,\ldots,i_k}:=X_{i_1}\times_{G/B}\ldots\times_{G/B}X_{i_k}=G/B\times_{G/P_{i_1}}G/B\ldots\times_{G/P_{i_k}}G/B\subset (G/B)^{k+1}
$$
is called the Bott-Samelson variety for $w$ (and for the fixed reduced expression of it).

We have the map provided by the projections to the first and last factors
$$
\alpha_{i_1,\ldots,i_k}:\ Z_{i_1,\ldots,i_k}\to G/B\times G/B.
$$
The main geometric result we use is the following  theorem due to Cline, Parshall and Scott.

\begin{thm} \cite{CPS}\label{bottsamelson}\

\begin{enumerate}
\item
The image of the map $\alpha_{i_1,\ldots,i_k}$ coincides with $X_w$.
\item
The map $\alpha_{i_1,\ldots,i_k}:\ Z_{i_1,\ldots,i_k}\to X_w$ is birational, in particular $$R\alpha_{i_1,\ldots,i_k*}(\mathcal{O}_{Z_{i_1,\ldots,i_k}})=\mathcal{O}_{X_w}.$$
\end{enumerate}
\end{thm}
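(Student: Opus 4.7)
My approach is to induct on $k=\ell(w)$, using that the Bott--Samelson variety $Z_{i_1,\ldots,i_k}$ is an iterated $\mathbb{P}^1$-bundle over $G/B$. Each $X_{i_j}=G/B\times_{G/P_{i_j}}G/B$ is a $\mathbb{P}^1$-bundle via either projection, so the iterated fiber product along first-coordinate projections realizes $Z_{i_1,\ldots,i_k}$ as a smooth, irreducible, proper scheme that is a tower of such $\mathbb{P}^1$-bundles; in particular it is Cohen--Macaulay, and every $K\alpha_{i_1,\ldots,i_k}$-derived-functor computation can be broken up one step at a time.

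For statement (1), a point of $Z_{i_1,\ldots,i_k}$ is a gallery of Borels $(B_0,\ldots,B_k)$ with consecutive pairs in relative position $\le s_{i_j}$. The iterated Bruhat product rule $\overline{Bs_{i_1}B}\cdots\overline{Bs_{i_k}B}=\overline{BwB}$, applicable because $s_{i_1}\cdots s_{i_k}=w$ is reduced, places $(B_0,B_k)\in X_w$, and reducedness produces galleries whose images sweep out the open cell $X_w^\circ$ of pairs in exact relative position $w$. By properness of $Z_{i_1,\ldots,i_k}$ the image is closed, hence equals $X_w$. For the birationality half of (2), one observes that over $X_w^\circ$ the fibers of $\alpha_{i_1,\ldots,i_k}$ are singletons: a minimal gallery of prescribed reduced type between two fixed Borels is unique, so $\alpha_{i_1,\ldots,i_k}$ restricts to an isomorphism on the dense open $\alpha_{i_1,\ldots,i_k}^{-1}(X_w^\circ)$.

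The heart of the matter is the identification $R\alpha_{i_1,\ldots,i_k\,*}\mathcal{O}_{Z_{i_1,\ldots,i_k}}=\mathcal{O}_{X_w}$. Writing $w=w's_{i_k}$ with $\ell(w')=k-1$ I would factor
\[
Z_{i_1,\ldots,i_k}\xrightarrow{\pi} Z_{i_1,\ldots,i_{k-1}}\xrightarrow{\alpha'\times\mathrm{id}} X_{w'}\times_{G/B}X_{i_k}\xrightarrow{\beta} X_w,
\]
where $\pi$ forgets $B_k$ (a $\mathbb{P}^1$-bundle, giving $R\pi_*\mathcal{O}=\mathcal{O}$) and $\beta$ is induced by projection to the outer Borels. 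Flat base change along the second projection $X_{i_k}\to G/B$ combined with the inductive hypothesis $R\alpha'_*\mathcal{O}=\mathcal{O}_{X_{w'}}$ yields $R(\alpha'\times\mathrm{id})_*\mathcal{O}=\mathcal{O}_{X_{w'}\times_{G/B}X_{i_k}}$, and the projection formula then reduces the problem to computing $R\beta_*\mathcal{O}$.

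The main obstacle is this final step: $\beta$ is proper and birational, but its fibers collapse along the locus where the gallery of length $k$ ceases to be reduced, so the statement $R\beta_*\mathcal{O}=\mathcal{O}_{X_w}$ already encodes the normality of $X_w$ together with a Grauert--Riemenschneider style vanishing. I would attack it by a parallel induction establishing normality of $X_w$ through the parabolic projection $X_w\to \pi_{i_k}(X_w)\subseteq G/B\times G/P_{i_k}$ --- a $\mathbb{P}^1$-fibration over a Schubert variety for a strictly shorter Weyl group element --- combined with Zariski's Main Theorem (to handle the relative-dimension-zero part of $\beta$) and the vanishing $H^{>0}(\mathbb{P}^1,\mathcal{O})=0$ applied fiberwise via Leray. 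Intertwining the normality statement with the direct-image statement, so that each step feeds the next, is exactly the delicate bootstrap that constitutes the substance of the Cline--Parshall--Scott argument.
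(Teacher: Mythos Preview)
The paper does not prove this theorem: it is quoted from \cite{CPS} and used as a black box, so there is no argument in the paper for your proposal to be compared against. Your sketch is a reasonable outline of the standard proof, and you have correctly located the substantive difficulty --- birationality alone does not yield $R\alpha_{i_1,\ldots,i_k*}\mathcal{O}=\mathcal{O}_{X_w}$, despite the phrase ``in particular'' in the statement; one needs in addition that Schubert varieties are normal with rational singularities, which is precisely the content of the Cline--Parshall--Scott bootstrap (or of later arguments by Demazure, Ramanathan, Mehta--Ramanathan, Andersen, etc.). One small correction to your factorization: the map $\pi$ forgetting $B_k$ is not a $\mathbb{P}^1$-bundle in general, since a gallery $(B_0,\ldots,B_{k-1})$ extends to $(B_0,\ldots,B_k)$ with $B_{k-1}\sim_{s_{i_k}}B_k$ by a $\mathbb{P}^1$ worth of choices, so $\pi$ is indeed a $\mathbb{P}^1$-bundle --- but then your intermediate space should be $X_{w'}\times_{G/B}X_{i_k}$ via the \emph{second} projection of $X_{w'}$ and the \emph{first} projection of $X_{i_k}$, and the pushforward along $\alpha'\times\mathrm{id}$ uses base change for that fiber product. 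With that adjustment the inductive skeleton is sound.
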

\subsection{Demazure functors.}
Consider the functor
$$
D_w : D( \QCoh^G(G/B \times X)) \to D( \QCoh^G(G/B \times X)),\
D_w(M) :=\She_{X_w} * M.
$$
Below we prove that the functors $D_w, w\in W,$ form Demazure Descent Datum on the category $D( \QCoh^G(G/B \times X))$. 

\begin{rem}
The monoidal structure on the category $\operatorname{ QCHecke}(G,B)$ is defined in the weak sense, in particular, we can not talk about a co-associative coalgebra in this category. Thus the two parts of the definition of Demazure Descent Data are to be treated separately. Braid relations between the objects $\She_{X_w}, w\in W$ are checked in  $D( \QCoh^G(G/B \times X))$, but co-projector structures on $X_{\Ob_{s_i}}$ are defined using different considerations.
\end{rem}
Consider the projection $p_i:\ G/B \times X \to G/P_i\times X$ and the corresponding inverse and direct image functors 
$$
Lp_{i}^*,\ Rp_{i*}:\
D( \QCoh^G(G/B \times X))\overset{\longleftarrow}{\longrightarrow}
D( \QCoh^G(G/P_i \times X)).
$$
 The endofunctor $D_i $  of the category $D( \QCoh^G(G/B \times X))$ given by their composition carries a canonical strucutre of a comonad (see for example \cite[section VII.6]{Mac}). Since the map is rational, the projection formula implies that the comonad is co-projector: the coproduct map $D_i\to D_i\circ D_i$ is an isomorphism.
\begin{lemma}
The functors $D_{s_i}$  and $D_i$ are isomorphic.
\end{lemma}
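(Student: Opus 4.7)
The plan is to unwind the convolution $\She_{X_{s_i}}*M$ as a push-pull along two natural projections out of $X_{s_i}\times X$, and then identify that push-pull with $Lp_i^*Rp_{i*}(M)$ via equivariant flat base change. This is essentially Lemma \ref{comonad} carried out in the relative setting, following the recipe of Remark \ref{technical2}.

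Step one: apply Lemma \ref{technical} with $X_1=X_2=G/B$, $X_3=X$, $Z_{12}=G/B\times G/B$, $Z_{23}=G/B\times X$ (with the evident projections), $Z'_{12}=X_{s_i}$, and $\alpha:X_{s_i}\hookrightarrow G/B\times G/B$ the closed embedding. The required flatness holds: projections out of products are flat, while the two composites $X_{s_i}\to G/B$ are flat because $G/B\to G/P_i$ is a $P_i/B$-bundle. Then $Z'_{12}\times_{X_2}Z_{23}\iso X_{s_i}\times X$, and the maps $\pr'_{13},\pr'_{23}$ are precisely the two natural maps $q_{1,i},q_{2,i}:X_{s_i}\times X\to G/B\times X$ induced by the two projections $X_{s_i}\to G/B$. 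Since $L(\pr'_{12})^*\She_{X_{s_i}}=\She_{X_{s_i}\times X}$ is the unit for $\overset{L}{\otimes}$ on $X_{s_i}\times X$, Lemma \ref{technical} collapses the convolution to
\[
\She_{X_{s_i}}*M \iso Rq_{1,i*}Lq_{2,i}^* M.
\]

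Step two: since $X_{s_i}=G/B\times_{G/P_i}G/B$, we have $X_{s_i}\times X=(G/B\times X)\times_{G/P_i\times X}(G/B\times X)$, so the square
\[
\xymatrix{
X_{s_i}\times X \ar[r]^{q_{2,i}} \ar[d]_{q_{1,i}} & G/B\times X \ar[d]^{p_i} \\
G/B\times X \ar[r]_{p_i} & G/P_i\times X
}
\]
is a $G$-equivariant Cartesian square in which $p_i$ is flat (even smooth). Equivariant flat base change (Proposition \ref{equiprop}) then supplies the canonical isomorphism $Lp_i^*Rp_{i*}\iso Rq_{1,i*}Lq_{2,i}^*$, which combined with the display above produces the desired functorial isomorphism $D_{s_i}(M)\iso D_i(M)$.

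No step should pose serious difficulty: the first reduction is precisely the content of Remark \ref{technical2}, and the base change is a direct application of Proposition \ref{equiprop}. The only bookkeeping to watch is that $L(\pr'_{12})^*\She_{X_{s_i}}$ really is the structure sheaf of the entire $X_{s_i}\times X$, so that tensoring with it acts as the identity and the transformed convolution $\She_{X_{s_i}}*'M$ collapses cleanly to a push-pull.
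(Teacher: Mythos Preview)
Your proposal is correct and follows exactly the route the paper takes: the paper's proof is the single line ``follows immediately from Lemma~\ref{comonad}'', and your two steps are precisely the proof of Lemma~\ref{comonad} (Lemma~\ref{technical} via Remark~\ref{technical2}, then flat base change) rerun in the relative setting over $X$. There is nothing different in substance---you have simply made explicit what the paper packages into the reference.
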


\begin{proof}
The statement follows immediately from Lemma \ref{comonad}.
\end{proof}

Notice that the associativity for the monoidal action of $\operatorname{ QCHecke}(G,B)$ on the category $D( \QCoh^G(G/B \times X))$ implies that all relations up to a non-specified isomorphism can be checked in the Hecke category.

\begin{prop}\label{BraidRel}
Let $w=s_{k_1} \cdots s_{k_n}$ be a reduced expression. Then 
$\She_{X_{k_1}}* \ldots * \She_{X_{k_n}}$ and  $\She_{X_w}$  are isomorphic as objects in $\operatorname{ QCHecke}(G,B)$.
\end{prop}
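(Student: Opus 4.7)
The plan is to recognize that the proposition is essentially a direct application of Lemma \ref{composition} combined with Theorem \ref{bottsamelson}, once the geometric setup of Section \ref{correspondences} is matched with the flag variety situation.

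First I would set up the correspondence framework with $X = G/B$ and $Y_i = G/P_i$ for each simple root $\alpha_i$, with $\phi_i : G/B \to G/P_i$ the canonical projection. Since $P_i/B \iso \Proj^1$, each $\phi_i$ is a flat surjective $G$-equivariant morphism, so we are in the setting of Subsection \ref{correspondences}. Under this identification, $X_i = G/B \times_{G/P_i} G/B$ coincides with $X_{s_i}$, and since $\alpha_i : X_i \hookrightarrow G/B \times G/B$ is a closed embedding, we have $M_i = R\alpha_{i*}(\She_{X_i}) = \She_{X_{s_i}}$. Similarly, $Z_{k_1,\ldots,k_n}$ as defined in \ref{correspondences} is exactly the Bott--Samelson variety for the chosen reduced expression of $w$.

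Next I would apply Lemma \ref{composition} iteratively to obtain a canonical isomorphism
\[
\She_{X_{k_1}} * \cdots * \She_{X_{k_n}} \iso R\alpha_{k_1,\ldots,k_n\, *}(\She_{Z_{k_1,\ldots,k_n}})
\]
in $\operatorname{QCHecke}(G,B)$. Finally, since $w = s_{k_1} \cdots s_{k_n}$ is a \emph{reduced} expression, Theorem \ref{bottsamelson} of Cline--Parshall--Scott applies: the image of $\alpha_{k_1,\ldots,k_n}$ is precisely $X_w$, and the induced map $Z_{k_1,\ldots,k_n} \to X_w$ is birational, so
\[
R\alpha_{k_1,\ldots,k_n\, *}(\She_{Z_{k_1,\ldots,k_n}}) \iso \She_{X_w}.
\]
Combining the two isomorphisms yields the desired conclusion; equivalently, one may invoke Corollary \ref{rational} directly, as the hypothesis of rationality is supplied by part (2) of Theorem \ref{bottsamelson}.

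The argument is thus a formal consequence of previously established results, and the only substantive input is the Cline--Parshall--Scott theorem on birationality of the Bott--Samelson resolution for reduced expressions. No separate obstacle arises: the potential subtlety would have been checking that $R\alpha_*(\She_{Z})$ really is concentrated in degree zero with value $\She_{X_w}$, but this is exactly what the ``in particular'' clause of Theorem \ref{bottsamelson}(2) asserts. Assumptions on $G$-equivariance are automatic since all maps and varieties in sight are $G$-equivariant.
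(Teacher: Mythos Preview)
Your proposal is correct and follows exactly the same approach as the paper: apply Lemma \ref{composition} to identify the convolution with $R\alpha_{k_1,\ldots,k_n\,*}(\She_{Z_{k_1,\ldots,k_n}})$, and then invoke Theorem \ref{bottsamelson} to conclude this equals $\She_{X_w}$. The paper's proof is simply a two-sentence version of what you wrote.
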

\begin{proof}
By Lemma \ref{composition}, the convolution product $\She_{X_{k_1}}* \ldots * \She_{X_{k_n}}$ is isomorphic to the direct image of the structure sheaf of the Bott-Samelson variety $R\alpha_{i_1,\ldots,i_k*}(\mathcal{O}_{Z_{i_1,\ldots,i_k}})$. By Theorem \ref{bottsamelson}, the latter object is isomorphic to the structure sheaf of $X_w$.
\end{proof}

Now we are prepared to prove the central result of the paper.

\begin{thm} \label{DescentData}
The functors $\{D_w, w \in W\}$ form  Demazure Descent Data on the category $D(\QCoh^G(G/B \times X))$.
\end{thm}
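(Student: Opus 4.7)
The plan is to verify separately the two components of Demazure Descent Data, namely the weak braid monoid action and the co-projector structure on each $D_{s_k}$, by reducing each to a result already established in the excerpt.

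First, for the braid monoid relations, suppose $w_1, w_2 \in W$ satisfy $\ell(w_1 w_2) = \ell(w_1) + \ell(w_2)$. Concatenating reduced expressions for $w_1$ and $w_2$ yields a reduced expression $s_{k_1} \cdots s_{k_n}$ for $w_1 w_2$. Applying Proposition \ref{BraidRel} three times (to $w_1$, to $w_2$, and to $w_1 w_2$) produces isomorphisms in $\operatorname{QCHecke}(G,B)$ identifying $\She_{X_{w_1}} * \She_{X_{w_2}}$ with $\She_{X_{k_1}} * \cdots * \She_{X_{k_n}} \simeq \She_{X_{w_1 w_2}}$. Since the convolution with these sheaves defines the functors $D_{w_i}$ via the monoidal action of $\operatorname{QCHecke}(G,B)$ on $D(\QCoh^G(G/B\times X))$, the resulting isomorphism of objects transports to an isomorphism of endofunctors $D_{w_1}\circ D_{w_2} \simeq D_{w_1 w_2}$. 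Here we use that, although the monoidal structure is only weak in the sense that no associativity constraint is specified, it is enough to produce, for each ordered pair of objects, an isomorphism of composed convolution functors, and the argument is insensitive to the choice of associator.

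Second, for the co-projector structure on $D_{s_k}$, I would invoke the lemma immediately preceding the theorem, which identifies $D_{s_k}$ with the endofunctor $D_k = Lp_k^* \circ Rp_{k*}$ associated to the projection $p_k : G/B \times X \to G/P_k \times X$. The adjunction $Lp_k^* \dashv Rp_{k*}$ equips $D_k$ with its canonical comonad structure $(\epsilon_k, \eta_k)$ coming from abstract nonsense (e.g.\ \cite[Section VII.6]{Mac}): the counit $\epsilon_k$ is the adjunction counit and the coproduct $\eta_k$ is $Lp_k^* \, \eta_{p_k} \, Rp_{k*}$, where $\eta_{p_k}$ is the unit. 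To check that this comonad is a co-projector, i.e.\ that $\eta_k : D_k \to D_k \circ D_k$ is an isomorphism, it suffices to show that the natural map $\id \to Rp_{k*} \circ Lp_k^*$ is an isomorphism; this is the standard consequence of rationality of $p_k$ (the fibers are $\Proj^1$, so $Rp_{k*}\She_{G/B \times X} = \She_{G/P_k \times X}$) combined with the projection formula, as already noted in the remark following Lemma \ref{comonad}. Transporting $(\epsilon_k, \eta_k)$ along the isomorphism $D_{s_k} \simeq D_k$ yields the required co-projector structure on $D_{s_k}$.

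The main potential obstacle is the first step: one must be attentive to the weakness of the monoidal structure on $\operatorname{QCHecke}(G,B)$, so the isomorphisms $D_{w_1} \circ D_{w_2} \simeq D_{w_1 w_2}$ are not canonical and depend on choices of reduced expressions and associators. However, the definition of a weak braid monoid action does not require coherence, so existence of \emph{some} such isomorphism is all that needs to be produced, and Proposition \ref{BraidRel} supplies this directly. With both pieces in place, the data $(\{D_w\}_{w\in W}, \{(\epsilon_k,\eta_k)\}_k)$ is by definition Demazure Descent Data on $D(\QCoh^G(G/B \times X))$.
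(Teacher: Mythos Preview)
Your proposal is correct and follows essentially the same approach as the paper: the braid relations are obtained by concatenating reduced expressions and applying Proposition~\ref{BraidRel}, and the co-projector structure on each $D_{s_k}$ comes from the identification $D_{s_k}\simeq Lp_k^*\circ Rp_{k*}$ together with rationality of $p_k$ and the projection formula. The paper's own proof is a bit terser---it simply refers back to the earlier discussion for the co-projector part---but your more explicit account of the adjunction and the role of the weak monoidal structure matches the paper's reasoning exactly.
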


\begin{proof}[Proof of Theorem \ref{DescentData}]
We have proved that each of the functors $D_i$ is a comonad and the coproduct maps are isomorphisms of functors. 

It remains  to show that for all $w_,w_2 \in W$ with $\ell(w_1 w_2)=\ell(w_1) +\ell(w_2)$ we have
\[ D_{w_1} \circ D_{w_2} \iso D_{w_1 w_2}. \]
Fix  reduced expressions for the Weyl gorup elements  $w_1=s_{k_1} \cdots s_{k_n}$ and $w_2=s_{j_1} \cdots s_{j_m}$. Since $\ell(w_1 w_2)=\ell(w_1) +\ell(w_2)$ the expression $w_1w_2=s_{k_1} \cdots s_{k_n} s_{j_1} \cdots s_{j_m}$. For any $M\in D(\QCoh^G(G/B \times X))$ we obtain
\begin{align}
D_{w_1} \circ D_{w_2} (M)& \widetilde{\to}\She_{X_{w_1}} * \She_{X_{w_2}} * M \\
& \widetilde{\to} \She_{X_{k_1}} * \cdots * \She_{X_{k_n}} * \She_{X_{j_1}} * \cdots * \She_{X_{j_m}} * M \\
& \widetilde{\to} \She_{X_{w_1 w_2}} * M\\
& \widetilde{\to} D_{w_1 w_2}(M).
\end{align}
\end{proof}

\section{Descent category}

Consider the Descent category for the consturcted Demazure Descent Data on the category $D(\QCoh^G(G/B \times X))$.

\begin{thm} \label{descent} The descent category 
$\desc(D(\QCoh^B(X)),D_w,w\in W)$ is equivalent to $D(\QCoh^G(X))$.
\end{thm}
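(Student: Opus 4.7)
My plan is to realize the desired equivalence through the adjoint pair $(Lq^{*},Rq_{*})$ for the $G$-equivariant projection $q\colon G/B\times X\to X$. Via the canonical equivalence $D(\QCoh^B(X))\iso D(\QCoh^G(G/B\times X))$ recalled in Section~3, the descent data of Theorem~\ref{DescentData} lives on the right-hand category, so it suffices to show that $Lq^{*}\colon D(\QCoh^G(X))\to D(\QCoh^G(G/B\times X))$ is fully faithful with essential image exactly the descent subcategory, and that $Rq_{*}$ is a quasi-inverse on this subcategory.

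For fully faithfulness the projection formula together with the standard vanishing $R\Gamma(G/B,\She_{G/B})=k$ gives $Rq_{*}Lq^{*}N\iso R\Gamma(G/B,\She_{G/B})\otimes N\iso N$, so the adjunction unit is an isomorphism. To see that $Lq^{*}$ lands in descent, factor $q=q_{i}\circ p_{i}$ through the parabolic projection $p_{i}\colon G/B\times X\to G/P_{i}\times X$ with fibre $P_{i}/B\iso\Proj^{1}$; the vanishing of higher cohomology on $\Proj^{1}$ yields $Rp_{i*}\She_{G/B\times X}\iso\She_{G/P_{i}\times X}$, whence by the projection formula
\[ D_{i}\circ Lq^{*} \;=\; Lp_{i}^{*}\circ Rp_{i*}\circ Lq^{*} \;\iso\; Lp_{i}^{*}\circ Lq_{i}^{*} \;\iso\; Lq^{*}, \]
and an inspection identifies this isomorphism with the counit $\epsilon_{i}$ evaluated on $Lq^{*}N$.

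The crux is essential surjectivity. Fix $M\in\desc$ and aim to build an isomorphism $M\iso Lq^{*}Rq_{*}M$. Since the open $G$-orbit is dense in $G/B\times G/B$, we have $X_{w_{0}}=G/B\times G/B$ and thus $\She_{X_{w_{0}}}=\She_{G/B\times G/B}$. The convolution then unfolds as $\She_{X_{w_{0}}}*M=R\pr_{13*}L\pr_{23}^{*}M$ on $G/B\times G/B\times X$, and flat base change in the Cartesian square
\[ \xymatrix{G/B\times G/B\times X \ar[r]^{\pr_{23}} \ar[d]_{\pr_{13}} & G/B\times X \ar[d]^{q} \\ G/B\times X \ar[r]_{q} & X } \]
identifies this object with $Lq^{*}Rq_{*}M$. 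On the other hand, fixing a reduced expression $w_{0}=s_{k_{1}}\cdots s_{k_{N}}$, Proposition~\ref{BraidRel} identifies $\She_{X_{w_{0}}}*M$ with $D_{k_{1}}\circ\cdots\circ D_{k_{N}}(M)$; iteratively applying the descent isomorphisms $\epsilon_{k_{j}}\colon D_{s_{k_{j}}}M\iso M$ from innermost to outermost, this composition collapses to $M$. Chaining the two strings of isomorphisms yields $M\iso Lq^{*}Rq_{*}M$, as required.

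The main obstacle is precisely the base-change identification $\She_{X_{w_{0}}}*M\iso Lq^{*}Rq_{*}M$: it rests on the geometric fact that $X_{w_{0}}$ coincides with the full product $G/B\times G/B$, which is what turns the convolution into pullback-pushforward along $q$. Once this geometric input and the braid relation of Proposition~\ref{BraidRel} (ultimately a consequence of the Cline-Parshall-Scott Theorem~\ref{bottsamelson} on Bott-Samelson resolutions) are in place, essential surjectivity reduces to the routine iteration of the descent hypothesis.
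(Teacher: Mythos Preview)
Your argument is correct and follows essentially the same route as the paper: both identify the descent subcategory with the essential image of the fully faithful functor $Lq^{*}$ (the paper's $Lp^{*}$) by using the factorization $q=q_{i}\circ p_{i}$ for one inclusion and the isomorphism $D_{w_{0}}(M)\iso M$---obtained by iterating the descent condition along a reduced expression---for the other. The only cosmetic difference is that you spell out explicitly the base-change identification $D_{w_{0}}\iso Lq^{*}Rq_{*}$ via $X_{w_{0}}=G/B\times G/B$, whereas the paper packages this as the Remark following Lemma~\ref{D(M)=M}.
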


Notice that the inverse image functors $Lp_k^*:\ D(\QCoh^G(G/P_i \times X))\to D(\QCoh^G(G/B \times X))$ are fully faithful by projection formula since the maps are rational. The same is true for 
$Lp^*:\ D(\QCoh^G(\times X))\to D(\QCoh^G(G/B \times X))$. Here $p$ denotes the projection $G/B\times X \to X$.

\begin{lemma} \label{D(M)=M}
An object $M$ in $D(\QCoh^G(G/B \times X))$ belongs to the essential image of $Lp_k^* $ if and only if the coaction map $ M\to D_k(M)$is an isomorphism.
\end{lemma}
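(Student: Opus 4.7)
The plan is to exploit the concrete identification $D_k \iso Lp_k^* \circ Rp_{k*}$ from Lemma \ref{comonad}, together with the full faithfulness of $Lp_k^*$ noted just before the statement. Under this identification, the comonad counit $\epsilon_M : D_k(M) \to M$ is literally the adjunction counit $Lp_k^* Rp_{k*}(M) \to M$, and (as explained in the remark following the definition of $\desc$) the ``coaction map'' $M \to D_k(M)$ is $\epsilon_M^{-1}$ whenever it exists; so the two conditions in the statement are equivalent and it suffices to work with the counit.

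For the easy direction, suppose $\epsilon_M$ is an isomorphism. Then $M \iso Lp_k^* Rp_{k*}(M)$, so $M$ lies in the essential image of $Lp_k^*$ with witness $N := Rp_{k*}(M)$.

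For the other direction, suppose $M \iso Lp_k^* N$ for some $N \in D(\QCoh^G(G/P_k \times X))$. Full faithfulness of $Lp_k^*$, recalled just before the lemma from the projection formula plus rationality of $p_k$, means that the unit $\eta_N : N \to Rp_{k*} Lp_k^* N$ of the adjunction is an isomorphism. The standard triangle identity
\begin{equation}
\epsilon_{Lp_k^* N} \circ Lp_k^*(\eta_N) = \id_{Lp_k^* N}
\end{equation}
together with the invertibility of $Lp_k^*(\eta_N)$ then forces $\epsilon_{Lp_k^* N}$ to be an isomorphism. Transporting along the given iso $M \iso Lp_k^* N$ by naturality of $\epsilon$, we conclude that $\epsilon_M$ is an isomorphism as well.

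There is no serious obstacle: the only thing to be careful about is to verify that the comonad counit produced abstractly in Lemma \ref{comonad} coincides with the adjunction counit of $(Lp_k^*, Rp_{k*})$, which is built into that identification, so that the triangle identities can be used. Once this is in place the argument is purely formal adjunction nonsense.
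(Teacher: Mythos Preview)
Your proof is correct and follows essentially the same route as the paper: both use the identification $D_k \iso Lp_k^* Rp_{k*}$ and the fact that the adjunction unit $\id \to Rp_{k*} Lp_k^*$ is an isomorphism (full faithfulness of $Lp_k^*$) to handle the nontrivial direction. You are somewhat more careful than the paper in invoking the triangle identity to pin down that the specific map $\epsilon_M$ is the isomorphism, whereas the paper just writes $D_k(M) \iso M$; but the underlying argument is the same.
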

\begin{proof}
We identified the functor $D_k$ with the composition $Lp_k^* R p_{k*}$. Thus, $M \iso D_k(M)$ implies that $M$ belongs to the essential image of  $Lp_k^*$. Assume that $M = Lp_k^*(N)$ for some $N \in D(\QCoh^{G}(G/P_k \times X))$. By projection formula, the adjunction map $\id \to Rp_{k*}p_k^*$ is an isomorphism. Thus we have
\[ D_k(M) \iso Lp_k^* R p_{k*} Lp_k^*(N) \iso Lp_k^* (N)=M . \qedhere \]
\end{proof}

\begin{rem}
The same argument shows that an object $M$ in $D(\QCoh^G(G/B \times X))$ belongs to the essential image of $Lp^*$ if and only if $D_{w_0}(M)$ is isomorphic to  $M$.
\end{rem}

\begin{proof}[Proof of Theorem \ref{descent}]
Let $M \in \desc(D(\QCoh^B(X)),D_w,w\in W)$. For every simple root $\alpha_k$ the object  $D_k(M)$  is isomorphic to $M$. Choose a reduced expression $s_{k_1} \cdots s_{k_n}$ for $w_0$. We have
\begin{align}
D_{k_1} \circ \cdots \circ D_{k_n} (M)\iso D_{w_0}(M).
\end{align}
It follows that $M$ belongs to  the essential image of  $D(\QCoh^G(X))$.  

In particular,  the descent category $\desc(D(\QCoh^B(X)),D_w,w\in W)$ is a full subcategory in the essential image of the functor $Lp^*$. 

To prove the other embedding, notice that 
the map $p$ factors as $$G/B \times X \to G/P_k \times X \to  X.$$ It follows that  the essential image of $Lp^*$ is a full subcategory in the essential image of $Lp_k^*$ for all $k$.  This completes the proof of the Theorem.
\end{proof}

\end{document}